\definecolor{darkred}{rgb}{0.7,0.1,0.1}
\definecolor{darkblue}{rgb}{0.1,0.1,0.4}
\definecolor{darkgrey}{rgb}{0.5,0.5,0.5}
\numberwithin{equation}{section}
\theoremstyle{plain}
\newtheorem{thm}{Theorem}[section]
\newtheorem{lem}[thm]{Lemma}
\theoremstyle{remark}
\theoremstyle{plain}
\newcommand{\hyp}[1]{$C^{2}$-hypersurface as in Definition~\ref{definition_hypersurface}}
\newcommand{\dom}{\mathrm{dom}\,}
\begin{document}
\title[]{Spectral transition for Dirac operators with electrostatic $\delta$-shell potentials supported on the straight line}


\author[J. Behrndt]{Jussi Behrndt}
\address{Institut f\"{u}r Angewandte Mathematik\\
Technische Universit\"{a}t Graz\\
 Steyrergasse 30, 8010 Graz, Austria\\
E-mail: {\tt behrndt@tugraz.at}}

\author[M. Holzmann]{Markus Holzmann}
\address{Institut f\"{u}r Angewandte Mathematik\\
Technische Universit\"{a}t Graz\\
 Steyrergasse 30, 8010 Graz, Austria\\
E-mail: {\tt holzmann@math.tugraz.at}}

\author[M. Tu\v{s}ek]{Mat\v{e}j Tu\v{s}ek}
\address{Department of Mathematics, Faculty of Nuclear Sciences and Physical Engeneering\\
Czech Technical University in Prague \\
Trojanova 13, 120 00, Prague \\
E-mail: {\tt matej.tusek@fjfi.cvut.cz}}

\keywords{Dirac operator, singular interaction, essential spectrum, spectral transition}

\subjclass[2010]{Primary 81Q10; Secondary 35Q40} 
\maketitle

\begin{abstract}
  In this note the two dimensional Dirac operator $A_\eta$ with an electrostatic $\delta$-shell interaction of strength $\eta\in\mathbb R$ supported on a 
  straight line is studied. We observe a spectral transition in the sense that for 
  the critical interaction strengths $\eta=\pm 2$ 
  the continuous spectrum of $A_\eta$ inside the spectral gap of the free Dirac operator $A_0$ collapses abruptly to a single point.
\end{abstract}

\section{Introduction}

Differential operators that admit a spectral transition are of particular interest in mathematical analysis and its applications. 
Typically, one expects that the properties of a model described by a differential operator depend continuously on the parameters. However, in some cases it turns out that there is an abrupt change in the spectral properties -- in other words, a spectral transition. 
A well known example in this regard is the Smilansky model \cite{S04a, S04b} (see also \cite{EB14, EB17, EBKT16}) or the indefinite Laplacian 
studied in \cite{BK17, CPP19}.
A spectral transition was also observed for Dirac operators with singular potentials supported on bounded curves in $\mathbb{R}^2$ and surfaces in $\mathbb{R}^3$. More precisely, when studying  perturbations $A_\eta$ of the free Dirac operator by an electrostatic $\delta$-shell potential of strength $\eta \in \mathbb{R}$, it turned out that there is an abrupt change in the spectral properties for $\eta= \pm 2$. While for $\eta \neq \pm 2$ -- which is referred to as the {\it non-critical case} -- it is shown in the three dimensional situation in \cite{AMV14, AMV15, BEHL17, BEHL19_1} that the essential spectrum consists of two unbounded rays and finitely many eigenvalues between these rays, the {\it critical case} $\eta= \pm 2$ remained initially open. In the critical case it was then proved in \cite{BH19, B21, OV18} that there is a loss of smoothness in the operator domain and that there may be one additional point in the essential spectrum; for general combinations of interaction strengths in the two dimensional setting see \cite{BHOP20}. We note that similar effects also appear in the study of Dirac operators on bounded domains with suitable boundary conditions; cf. \cite{BHM20, BFSV17, B21, CLMT21, H21}.

In this note we study the spectrum of a Dirac operator in $\mathbb{R}^2$ with an electrostatic $\delta$-shell potential of strength $\eta \in \mathbb{R}$ supported on 
the straight line $\Sigma\cong\mathbb R$ which is formally given by
\begin{equation*}
  A_\eta = -i (\sigma_1 \partial_1 + \sigma_2 \partial_2) + m \sigma_3 + \eta \sigma_0 \delta_\Sigma;
\end{equation*}
here and in the following $\sigma_1, \sigma_2, \sigma_3$ are the $\mathbb{C}^{2 \times 2}$ valued Pauli spin matrices defined in~\eqref{def_Pauli_matrices}, $\sigma_0$ is the $2 \times 2$-identity matrix, and $m \in \mathbb{R} \setminus \{ 0 \}$. In order to define this expression rigorously, we
 denote by $\mathbb R^2_+$ and $\mathbb R^2_-$ the upper and lower half plane, respectively, and we 
use the notation $f_\pm := f\upharpoonright \mathbb R^2_\pm$ for the restriction of a function $f$ defined on $\mathbb{R}^2$. 
Next, let 
\begin{equation*}
  H(\sigma, \mathbb R^2_\pm) := \big\{ f \in L^2(\mathbb R^2_\pm; \mathbb{C}^2): (\sigma_1 \partial_1 + \sigma_2 \partial_2) f \in L^2(\mathbb R^2_\pm; \mathbb{C}^2) \big\}.
\end{equation*}
Then there exists a bounded Dirichlet trace operator  $H(\sigma, \mathbb R^2_\pm) \rightarrow H^{-1/2}(\Sigma; \mathbb{C}^2)$, 
$f_\pm\mapsto f_\pm\vert_\Sigma$ 
(one shows this in the same way as in \cite[Lemma~2.1]{BFSV17} and \cite[Lemma~2.3]{BFSV17} for bounded domains), and this
allows us to define for $\eta\in \mathbb{R}$ and $m \in \mathbb{R} \setminus  \{ 0 \}$ the operator
\begin{equation} \label{def_A_eta_tau_intro}
  \begin{split}
    A_\eta f &:= \big(-i (\sigma_1 \partial_1 + \sigma_2 \partial_2) + m \sigma_3 \big) f_+ \oplus \big(-i (\sigma_1 \partial_1 + \sigma_2 \partial_2) + m \sigma_3 \big) f_-, \\
    \dom A_\eta&:= \bigg\{ f = f_+ \oplus f_- \in H(\sigma, \mathbb R^2_+) \oplus H(\sigma, \mathbb R^2_-): \\
    &\qquad\qquad\qquad   i\sigma_2 (f_+|_\Sigma - f_-|_\Sigma) = \frac{\eta}{2}    (f_+|_\Sigma + f_-|_\Sigma) \bigg\},
  \end{split}
\end{equation}
in $L^2(\mathbb{R}^2; \mathbb{C}^2)$.
This operator is the rigorous mathematical definition of a Dirac operator with an electrostatic $\delta$-shell interaction of strength $\eta$ and models, for $m>0$, the propagation of a particle with mass $m$ and spin $1/2$ under the influence of such a potential; cf. \cite{AMV14, BEHL19_1, BHOP20, CLMT21}.
Observe that for $\eta=0$ the operator in \eqref{def_A_eta_tau_intro} coincides with the free Dirac operator 
\begin{equation} \label{def_free_op}
  A_0 f = -i (\sigma_1 \partial_1 + \sigma_2 \partial_2) f + m \sigma_3 f, \quad \dom A_0 = H^1(\mathbb{R}^2; \mathbb{C}^2),
\end{equation}
and recall that $A_0$ is self-adjoint in $L^2(\mathbb{R}^2; \mathbb{C}^2)$ with purely (absolutely) continuous spectrum
\begin{equation*}
  \sigma(A_0)  = (-\infty, -|m|] \cup [|m|, +\infty).
\end{equation*}
The main result of this paper is the following theorem on the self-adjointness and the spectra of the operators $A_\eta$. 
It turns out that the continuous spectrum grows
under the influence of the $\delta$-shell interaction and also half of the gap $(-|m|, |m|)$ is filled when $\eta$ approaches the critical values $\pm 2$
from above or below. In the critical case $\eta=\pm 2$ a spectral transition appears: the continuous spectrum inside $(-|m|, |m|)$ vanishes abruptly 
and $0$ becomes an infinite dimensional eigenvalue.

\begin{thm}\label{main}
  The operator $A_\eta$ is self-adjoint in $L^2(\mathbb{R}^2; \mathbb{C}^2)$ and the following holds for the spectrum of $A_\eta$:
  \begin{itemize}
      \item[(i)] If $\eta < -2$, then $\sigma(A_\eta)=\big(-\infty, -|m|\frac{\eta^2-4}{\eta^2+4}\big] \cup [|m|, +\infty)$.
      \item[(ii)] If $\eta =- 2$, then $\sigma(A_{\eta}) = (-\infty, -|m|] \cup \{ 0 \} \cup [|m|, +\infty)$.
      \item[(iii)] If $-2<\eta < 0$, then $\sigma(A_{\eta})=(-\infty, -|m|] \cup \big[|m| \frac{4-\eta^2}{\eta^2+4}, +\infty\big)$.
      \item[(iv)] If $\eta=0$, then $\sigma(A_{0})=(-\infty, -|m|] \cup \big[|m| , +\infty\big)$.
      \item[(v)] If $0<\eta < 2$, then $\sigma(A_{\eta})=\big(-\infty, -|m|\frac{4-\eta^2}{\eta^2+4}\big] \cup [|m|, +\infty)$.
      \item[(vi)] If $\eta = 2$, then $\sigma(A_{\eta}) = (-\infty, -|m|] \cup \{ 0 \} \cup [|m|, +\infty)$.
      \item[(vii)] If $2<\eta$, then $\sigma(A_{\eta})=(-\infty, -|m|] \cup \big[|m| \frac{\eta^2-4}{\eta^2+4}, +\infty\big)$.
    \end{itemize}
For $\eta\not=\pm 2$ the spectrum of $A_\eta$ is purely continuous
  and for $\eta=\pm 2$ the point $0$ is an isolated eigenvalue of $A_\eta$ with infinite multiplicity 
  and the remaining spectrum 
  is purely continuous. 
\end{thm}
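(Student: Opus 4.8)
The strategy is to exploit the translation invariance of the problem along $\Sigma$ and to reduce everything to a one‑parameter family of one‑dimensional Dirac operators with a point interaction. Choosing coordinates so that $\Sigma=\{(x_1,0):x_1\in\mathbb R\}$ and $\mathbb R^2_\pm=\{\pm x_2>0\}$, I would apply the partial Fourier transform $\mathcal F$ in the variable $x_1$; it turns the free differential expression into multiplication in the dual variable $\xi$ by $\sigma_1\xi-i\sigma_2\partial_{x_2}+m\sigma_3$, and it should identify $A_\eta$ with the direct integral $\int_{\mathbb R}^{\oplus}A_\eta(\xi)\,d\xi$, where $A_\eta(\xi)$ is the one‑dimensional Dirac operator in $L^2(\mathbb R;\mathbb C^2)$ acting as $g\mapsto\sigma_1\xi\,g-i\sigma_2 g'+m\sigma_3 g$ on $\mathbb R\setminus\{0\}$ with domain
\[
  \big\{g\in H^1(\mathbb R_-;\mathbb C^2)\oplus H^1(\mathbb R_+;\mathbb C^2):\ i\sigma_2\big(g(0+)-g(0-)\big)=\tfrac{\eta}{2}\big(g(0+)+g(0-)\big)\big\};
\]
here one uses that in one dimension $\sigma_2$ alone is invertible, so the graph space of the Dirac derivative is simply $H^1$ on each half‑line and the trace is ordinary point evaluation. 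Self‑adjointness of $A_\eta(\xi)$ is then elementary: rewriting the interface condition as $(i\sigma_2-\tfrac{\eta}{2})g(0+)=(i\sigma_2+\tfrac{\eta}{2})g(0-)$ and observing $\det(i\sigma_2\mp\tfrac{\eta}{2})=1+\tfrac{\eta^2}{4}\neq 0$ for all $\eta\in\mathbb R$, one obtains a bijective transmission matrix $T_\eta:=(i\sigma_2-\tfrac{\eta}{2})^{-1}(i\sigma_2+\tfrac{\eta}{2})$ satisfying $T_\eta^*(i\sigma_2)T_\eta=i\sigma_2$; since $i\sigma_2$ governs the boundary form of the one‑dimensional Dirac expression, $A_\eta(\xi)$ is a self‑adjoint extension of the corresponding minimal operator, so $\int^\oplus A_\eta(\xi)\,d\xi$ and hence $A_\eta$ are self‑adjoint.

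Next I would compute the fibre spectra. The resolvents of $A_\eta(\xi)$ and of the free fibre operator differ by a finite‑rank operator, so $\sigma_{\mathrm{ess}}(A_\eta(\xi))=(-\infty,-E(\xi)]\cup[E(\xi),\infty)$ with $E(\xi):=\sqrt{\xi^2+m^2}$, and the rest of $\sigma(A_\eta(\xi))$ consists of eigenvalues inside the gap $(-E(\xi),E(\xi))$. For $\lambda$ in the gap, solving $(A_\eta(\xi)-\lambda)g=0$ on each half‑line by the decaying ansatz $g=c_\pm v_\pm e^{\mp\kappa x_2}$ with $\kappa=\sqrt{E(\xi)^2-\lambda^2}>0$ and inserting into the interface condition reduces a homogeneous $2\times2$ system to the secular equation
\[
  (4-\eta^2)\sqrt{E(\xi)^2-\lambda^2}=-4\eta\lambda .
\]
For $\eta\neq\pm2$ this has a unique solution $\lambda$ in the gap, with $|\lambda|=E(\xi)\,\tfrac{|4-\eta^2|}{\eta^2+4}$ and the sign of $\lambda$ forced by the requirement $\kappa>0$ (a separate check shows $\lambda=\pm m$ is a fibre eigenvalue for at most one value of $\xi$). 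For $\eta=\pm2$ the left‑hand side vanishes identically, hence necessarily $\lambda=0$; conversely, inserting $\lambda=0$ and $\kappa=E(\xi)$ back into the system one sees that it degenerates to an identity, so $0$ is a one‑dimensional eigenvalue of $A_\eta(\xi)$ for every $\xi\neq0$.

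The global statement is then assembled from the direct integral. Since the fibre spectra vary continuously in $\xi$ one has $\sigma(A_\eta)=\overline{\bigcup_{\xi\in\mathbb R}\sigma(A_\eta(\xi))}$; the essential parts combine to $(-\infty,-|m|]\cup[|m|,\infty)$ because $\bigcup_\xi[E(\xi),\infty)=[|m|,\infty)$, while the gap eigenvalue $\lambda(\xi)$ is a strictly monotone function of $|\xi|$ with $|\lambda(0)|=|m|\,\tfrac{|4-\eta^2|}{\eta^2+4}$, so for $\eta\neq\pm2$ it sweeps, according to its sign, either $[\,|m|\tfrac{|4-\eta^2|}{\eta^2+4},\infty)$ or $(-\infty,-|m|\tfrac{|4-\eta^2|}{\eta^2+4}\,]$, and for $\eta=\pm2$ it collapses to $\{0\}$; distinguishing the four sign regimes of $\eta$ then gives precisely (i)--(vii). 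Finally, the point/continuous dichotomy follows from $\ker(A_\eta-\lambda)=\int^\oplus\ker(A_\eta(\xi)-\lambda)\,d\xi$: for $\eta\neq\pm2$ every $\lambda$ is a fibre eigenvalue for only finitely many $\xi$, a Lebesgue‑null set, so $A_\eta$ has no eigenvalues and its spectrum is purely continuous; for $\eta=\pm2$ the value $0$ is a fibre eigenvalue of multiplicity one for all $\xi$ in the full‑measure set $\mathbb R\setminus\{0\}$, whence $\ker A_{\pm2}\cong L^2(\mathbb R)$ is infinite‑dimensional, $0$ is isolated in $\sigma(A_{\pm2})=(-\infty,-|m|]\cup\{0\}\cup[|m|,\infty)$, and no other point is a fibre eigenvalue on a set of positive measure, so the remaining spectrum is purely continuous.

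I expect the main obstacle to be the rigorous justification of the very first step, namely that $\mathcal F$ implements a unitary equivalence between $A_\eta$ and $\int^\oplus A_\eta(\xi)\,d\xi$. The delicate point is that the domain of $A_\eta$ involves the \emph{large} graph spaces $H(\sigma,\mathbb R^2_\pm)$ together with an interface condition only in $H^{-1/2}(\Sigma)$. One has to show that $f\in H(\sigma,\mathbb R^2_\pm)$ forces $\hat f(\xi,\cdot)\in H^1(\mathbb R_\pm;\mathbb C^2)$ for a.e.\ $\xi$ (again using the slicewise invertibility of $\sigma_2$), that $\mathcal F$ intertwines the distributional Dirichlet trace $H(\sigma,\mathbb R^2_\pm)\to H^{-1/2}(\Sigma)$ with the fibrewise point evaluation $\hat f(\xi,0\pm)$, and therefore that the $H^{-1/2}$ interface condition is equivalent to the pointwise one for a.e.\ $\xi$; together with the measurability of the field $\{A_\eta(\xi)\}_{\xi}$ this yields the claimed equivalence. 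Once this is in place, the remaining steps are routine ODE computations and direct‑integral bookkeeping.
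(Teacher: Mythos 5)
Your proposal is essentially correct, but it follows a genuinely different route from the paper: you decompose the operator itself into a direct integral of one--dimensional point--interaction Dirac operators via the partial Fourier transform along $\Sigma$, whereas the paper never decomposes $A_\eta$ at all. Instead it builds a boundary triple $\{L^2(\Sigma;\mathbb C^2),\Gamma_0,\Gamma_1\}$ for $S^*$ with the free operator as the distinguished extension, realizes $A_\eta$ as $S^*\upharpoonright\ker(\Gamma_1-\Theta\Gamma_0)$ for an explicit boundary operator $\Theta$, and reduces everything to the invertibility and boundary behaviour of $\Theta-M(z)$; only this boundary operator, acting on $L^2(\Sigma;\mathbb C^2)$, is diagonalized by the Fourier transform (the relevant symbol is $\theta_z(p)$, whose determinant yields exactly your secular equation $(4-\eta^2)\sqrt{p^2+m^2-z^2}=-4\eta z$, the same eigenvalue curves $|\lambda|=E(\xi)\,|\eta^2-4|/(\eta^2+4)$, and the same sign condition $\eta\lambda/(\eta^2-4)>0$). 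The trade--off is precisely the obstacle you flag: the paper's route never has to prove that the partial Fourier transform intertwines the weak trace $H(\sigma,\mathbb R^2_\pm)\to H^{-1/2}(\Sigma)$ with fibrewise point evaluation, nor that the transmission condition holds for a.e.\ fibre; the delicate domain issues are absorbed into abstract boundary--triple machinery (self--adjointness from self--adjointness of $\Theta$, the gap spectrum from $\ker(\Theta-M(z))$, and the purely continuous nature of $(-\infty,-|m|]\cup[|m|,\infty)$ from limiting estimates on $(\Theta-M(x+iy))^{-1}$ combined with simplicity of $S$). The paper's method also generalizes to curved $\Sigma$, where no fibre decomposition exists, while your method is more elementary and explicit once the direct--integral identification is established; the paper itself acknowledges the direct--integral route as a viable alternative.

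Two small points to tighten if you carry this out: (a) for $\eta=\pm2$ the value $0$ is a fibre eigenvalue for \emph{every} $\xi$ (including $\xi=0$, where $E(0)=|m|>0$ and the decaying ansatz still applies), not just $\xi\neq0$ --- immaterial for the measure--theoretic conclusion but worth stating correctly; and (b) your assertion that the non--gap spectrum of each fibre contains no eigenvalues (so that absence of eigenvalues of $A_\eta$ on $(-\infty,-|m|]\cup[|m|,\infty)$ follows) should be justified explicitly: for $|\lambda|>E(\xi)$ all solutions of the fibre ODE are oscillatory and for $|\lambda|=E(\xi)$ they are polynomially bounded, so none is square integrable on a half--line, and hence no embedded fibre eigenvalues occur.
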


Note that the spectrum of $A_\eta$ is invariant under the transformation $\eta \mapsto -\frac{4}{\eta}$. This symmetry would also follow from the stronger fact that $A_\eta$ and $A_{-4/\eta}$ are unitary equivalent; this can be shown in the same way as in \cite[Proposition~4.8~(i) and Proposition~4.15~(i)]{BHOP20}.

The proof of Theorem~\ref{main} is based on an efficient abstract technique that was applied in a similar form 
also in \cite{BHOP20, CLMT21}: We use a so-called boundary triple and its Weyl function 
to reduce the spectral analysis of $A_\eta$ in the gap $(-\vert m\vert,\vert m\vert)$ of $A_0$ 
to a certain boundary operator in $L^2(\Sigma; \mathbb{C}^2) \simeq L^2(\mathbb{R}; \mathbb{C}^2)$; cf. 
\cite{BHS19, BGP, DM91, DM95} for details on boundary triples and Weyl functions in the extension theory of symmetric operators. 
Since $\Sigma$ 
is a straight line the spectral properties of this boundary operator can be studied with the help of the Fourier transform. 
From the limit behaviour of the Weyl function and the boundary operator towards the real line we also conclude
that the set $(-\infty, -|m|] \cup [|m|, +\infty)$ consists of purely continuous spectrum of $A_\eta$ for all $\eta\in\mathbb R$. For $m=0$ this method is also applicable, then it follows for all $\eta \in \mathbb{R}$ that $\sigma(A_\eta) = \mathbb{R}$, i.e. there is no spectral transition.
We note that the method of direct integrals decomposing $A_\eta$ in its fibers would yield a similar result; 
cf. \cite{GL06} for a related problem on Dirac operators with Robin type boundary conditions. 
In this note we prefer to work with the boundary triple technique, since it allows generalizations for more complicated curves $\Sigma$ in a natural way, which we plan to study in the near future.

\subsection*{Notations}

Let 
\begin{equation} \label{def_Pauli_matrices}
   \sigma_1 := \begin{pmatrix} 0 & 1 \\ 1 & 0 \end{pmatrix}, \qquad
   \sigma_2 := \begin{pmatrix} 0 & -i \\ i & 0 \end{pmatrix}, \qquad
   \sigma_3 := \begin{pmatrix} 1 & 0 \\ 0 & -1 \end{pmatrix},
\end{equation}
be the Pauli spin matrices and denote by $\sigma_0$ the $2 \times 2$-identity matrix. Note that the Pauli matrices satisfy 
$ \sigma_j \sigma_k + \sigma_k \sigma_j = 2 \delta_{jk} \sigma_0$, $j,k \in \{ 1,2,3\}$.
For $x = (x_1,x_2)$ we will often write $\sigma \cdot x = \sigma_1 x_1 + \sigma_2 x_2$ and $\sigma \cdot \nabla = \sigma_1 \partial_1 + \sigma_2 \partial_2$.

    \subsection*{Acknowledgements}
    J. Behrndt and M. Holzmann gratefully acknowledge financial support by the Austrian Science Fund (FWF): P 33568-N. M.~Tu\v{s}ek was partially supported by the grant No.~21-07129S of the Czech Science Foundation (GA\v{C}R) and by the project CZ.02.1.01/0.0/0.0/16\_019/0000778 from the European Regional Development Fund. This publication is based upon work from COST Action CA 18232 MAT-DYN-NET, supported by COST (European Cooperation in Science and Technology), www.cost.eu.

 \section{Proof of Theorem~\ref{main}}

 Let $A_0$ be the free Dirac operator in \eqref{def_free_op} and recall that for $z \in \rho(A_0)$ the resolvent of $A_0$ is given by
\begin{equation*}
  \big[(A_0 - z)^{-1} f\big](x) = \int_{\mathbb{R}^2} G_z(x - y) f(y) d y, \quad f \in L^2(\mathbb{R}^2; \mathbb{C}^2),\, x \in \mathbb{R}^2,
\end{equation*}
where
\begin{equation*} 
  G_z(x) = \frac{i\sqrt{m^2 - z^2}}{2 \pi} K_1 \big(\sqrt{m^2 - z^2} | x |\big)  \frac{\sigma \cdot x}{ | x | }  + \frac{1}{2 \pi} K_0 \big(\sqrt{m^2 - z^2} | x |\big) \big( z \sigma_0 + m \sigma_3\big)
\end{equation*}
and $K_j$ is the modified Bessel functions of the second kind and order $j$; cf. \cite{AS84} and \cite{T92}. Here and in the following the complex square root is chosen such that it is holomorphic in $\mathbb{C} \setminus (-\infty, 0]$ and $\text{Re}\, \sqrt{z} > 0$. An important object in our analysis is the mapping $\mathcal{C}_z$ which is defined for $z \in \rho(A_0)$ by
\begin{equation*}
  \mathcal{C}_z \varphi := G_z * \varphi, \quad \varphi \in \mathcal{S}(\Sigma; \mathbb{C}^2),
\end{equation*}
where $\mathcal{S}(\Sigma; \mathbb{C}^2) \simeq \mathcal{S}(\mathbb{R}; \mathbb{C}^2)$ denotes the Schwartz space and the convolution is understood in the sense of distributions.
The action of $\mathcal{C}_z$ is 
\begin{equation*}
  \mathcal{C}_z \varphi(x) := \lim_{\varepsilon \searrow 0} \int_{\mathbb{R} \setminus (x_1-\varepsilon, x_1+\varepsilon)} G_z(x-y) \varphi(y) d y_1, \quad x=(x_1,0), y=(y_1,0) \in \Sigma.
\end{equation*}
In the following we will denote by $\mathcal{F}$ the Fourier transform on $\Sigma \simeq \mathbb{R}$.

\begin{lem} \label{proposition_C_z_line}
  The map $\mathcal{F} \mathcal{C}_z \mathcal{F}^{-1}$ is the multiplication operator with the matrix valued function
  \begin{equation*}
     p\mapsto \begin{pmatrix} \frac{z + m}{2 \sqrt{p^2+m^2-z^2}} & \frac{p}{2 \sqrt{p^2+m^2-z^2}} \\ \frac{p}{2 \sqrt{p^2+m^2-z^2}} & \frac{z - m}{2 \sqrt{p^2+m^2-z^2}} \end{pmatrix}.
  \end{equation*}
  In particular, $\mathcal{C}_z$ gives rise to a bounded operator in $H^s(\Sigma; \mathbb{C}^2)$ for any $s \in \mathbb{R}$.
\end{lem}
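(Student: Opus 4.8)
The plan is to recognise $\mathcal C_z$ as a convolution operator on $\Sigma\cong\mathbb R$ and to compute its symbol explicitly. Let $g_z(x_1):=G_z\big((x_1,0)\big)$ be the restriction of the fundamental matrix to the line. Using $\sigma\cdot x=\sigma_1 x_1$ and $|x|=|x_1|$ on $\Sigma$ together with the asymptotics $K_1(w)\sim w^{-1}$ and $K_0(w)\sim-\log w$ as $w\to0$, one sees that $g_z$ splits into an $L^1_{\mathrm{loc}}$-part (coming from $K_0$ and from the logarithmically singular part of $K_1$) plus a term behaving like $\tfrac{i}{2\pi}\,\mathrm{p.v.}\,\tfrac{\sigma_1}{x_1}$ near the origin; this is precisely what the $\varepsilon$-regularisation in the definition of $\mathcal C_z$ takes care of. Consequently $\mathcal C_z$ extends to convolution with the tempered distribution $g_z$ on $\mathbb R$, and hence $\mathcal F\mathcal C_z\mathcal F^{-1}$ is multiplication by $\widehat{g_z}$.

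To evaluate $\widehat{g_z}$ I would use the classical formula $\int_{\mathbb R}K_0(a|t|)e^{-ipt}\,dt=\pi/\sqrt{p^2+a^2}$, valid for $a>0$ and extended by analytic continuation to $\mathrm{Re}\,a>0$ with the principal branch of the square root; here $a=\sqrt{m^2-z^2}$, which satisfies $\mathrm{Re}\,a>0$ for all $z\in\rho(A_0)$. For the remaining term I would invoke $\tfrac{d}{dt}K_0(a|t|)=-a\,\mathrm{sgn}(t)K_1(a|t|)$ in the distributional sense (the right-hand side understood as a principal value, just as for $(\log|t|)'=\mathrm{p.v.}\,t^{-1}$), which gives $\mathcal F\big[\mathrm{sgn}(t)K_1(a|t|)\big](p)=-a^{-1}ip\,\pi/\sqrt{p^2+a^2}$. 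Inserting these into the two summands of $G_z$ on $\Sigma$ and using $p^2+a^2=p^2+m^2-z^2$, the two contributions combine to
\begin{equation*}
  \widehat{g_z}(p)=\frac{z\sigma_0+m\sigma_3+p\,\sigma_1}{2\sqrt{p^2+m^2-z^2}}=\begin{pmatrix} \frac{z+m}{2\sqrt{p^2+m^2-z^2}} & \frac{p}{2\sqrt{p^2+m^2-z^2}} \\ \frac{p}{2\sqrt{p^2+m^2-z^2}} & \frac{z-m}{2\sqrt{p^2+m^2-z^2}} \end{pmatrix},
\end{equation*}
which is the claimed multiplier. There is also a variant avoiding Bessel identities: since $G_z$ is the kernel of $(A_0-z)^{-1}$, whose two-dimensional Fourier symbol equals $(\sigma_1\xi_1+\sigma_2\xi_2+m\sigma_3+z)/(\xi_1^2+\xi_2^2+m^2-z^2)$, the operator $\mathcal C_z$ corresponds on the Fourier side to putting $\xi_1=p$ and integrating this symbol over $\xi_2\in\mathbb R$ against $\tfrac1{2\pi}\,d\xi_2$; the $\sigma_2\xi_2$-term vanishes by principal-value oddness in $\xi_2$ and $\tfrac1{2\pi}\int_{\mathbb R}d\xi_2/(\xi_2^2+p^2+m^2-z^2)=1/(2\sqrt{p^2+m^2-z^2})$, reproducing the same matrix.

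For the boundedness claim it suffices to observe that every entry of $\widehat{g_z}$ is a continuous function of $p\in\mathbb R$: near $p=0$ there is no singularity because $p^2+m^2-z^2\notin(-\infty,0]$ for $z\in\rho(A_0)$, and as $p\to\pm\infty$ the diagonal entries tend to $0$ while the off-diagonal ones tend to $\pm\tfrac12$. Hence $\widehat{g_z}\in L^\infty(\mathbb R;\mathbb C^{2\times2})$, and since the scalar weight $\langle p\rangle^s$ commutes with the matrix $\widehat{g_z}(p)$, the operator $\mathcal C_z$ is bounded on $H^s(\Sigma;\mathbb C^2)$ for every $s\in\mathbb R$, with operator norm at most $\sup_{p\in\mathbb R}\|\widehat{g_z}(p)\|$.

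I expect the main obstacle to be the distributional bookkeeping in the first two steps: verifying rigorously that $\mathcal C_z$, defined through the $\varepsilon$-regularised integral along $\Sigma$, coincides with convolution by the tempered distribution $g_z$ (so that passing to the Fourier side is legitimate), and justifying the analytic continuation in $a$ of the $K_0$-transform while checking that $\sqrt{m^2-z^2}$ and $\sqrt{p^2+m^2-z^2}$ stay in the open right half-plane. Once this is set up, the algebra combining the two summands is routine.
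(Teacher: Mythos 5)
Your proof is correct and follows essentially the same route as the paper: both rewrite $G_z$ restricted to $\Sigma$ as $\bigl(-i\sigma_1\tfrac{d}{dx_1}+m\sigma_3+z\sigma_0\bigr)$ applied to $\tfrac{1}{2\pi}K_0(\sqrt{m^2-z^2}\,|x_1|)$ (your identity $\tfrac{d}{dt}K_0(a|t|)=-a\,\mathrm{sgn}(t)K_1(a|t|)$ is exactly this observation) and then invoke the known Fourier transform of $K_0$, with the $H^s$-boundedness read off from the boundedness of the resulting symbol. Your alternative derivation by integrating the two-dimensional resolvent symbol over $\xi_2$ and your extra attention to the principal-value structure of the $K_1$-term are welcome additions, but do not change the substance of the argument.
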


\begin{proof}
  Let $\phi_z(x_1) := \frac{1}{2 \pi} K_0(\sqrt{m^2-z^2}|x_1|)$, $x_1 \in \mathbb{R} \setminus \{ 0 \}$. Since the Fourier transform takes $K_0(\kappa |x|)$ to
  $\sqrt{\pi/2}(p^2+\kappa^2)^{-1/2}$ we obtain
  $$\mathcal{F} (\phi_z * \varphi)(p) = \frac{1}{2 \sqrt{p^2+m^2-z^2} } \mathcal{F} \varphi(p),\quad  \varphi \in \mathcal{S}(\mathbb{R}).$$ 
  Now observe that
  \begin{equation*}
    G_z(x) = \left( -i \sigma_1 \frac{d}{dx_1} + m \sigma_3 + z \sigma_0 \right) \phi_z(x_1),\quad x=(x_1,0) \in \Sigma,
  \end{equation*}
  and hence the calculation rules for the Fourier transform of distributions \cite[Chapter~IX]{RSII} lead to
  \begin{equation*}
    \begin{split}
      \mathcal{F} \mathcal{C}_z \mathcal{F}^{-1} \varphi(p) &= \left( \sigma_1 p + m \sigma_3 + z \sigma_0 \right) \frac{1}{2 \sqrt{p^2+m^2-z^2}} \varphi(p) \\
      &=  \begin{pmatrix} \frac{z + m}{2 \sqrt{p^2+m^2-z^2}} & \frac{p}{2 \sqrt{p^2+m^2-z^2}} \\ \frac{p}{2 \sqrt{p^2+m^2-z^2}} & \frac{z - m}{2 \sqrt{p^2+m^2-z^2}} \end{pmatrix} \varphi(p)
    \end{split}
  \end{equation*}
  for $\varphi \in \mathcal{S}(\Sigma; \mathbb{R}^2)$,
  which yields the claimed result about the representation of $\mathcal{F} \mathcal{C}_z \mathcal{F}^{-1}$. Finally, taking the definition of the norm in $H^s(\Sigma; \mathbb{C}^2) \simeq H^s(\mathbb{R}; \mathbb{C}^2)$ with the help of the Fourier transform into account, one sees that $\mathcal{C}_z$ gives rise to a bounded operator in $H^s(\Sigma; \mathbb{C}^2)$ for all $s \in \mathbb{R}$.
\end{proof}

In the following
we shall make use of the closed symmetric restriction 
\begin{equation} \label{def_S} 
  S f = (-i \sigma \cdot \nabla + m \sigma_3) f, \quad \dom S = H^1_0(\mathbb{R}^2 \setminus \Sigma; \mathbb{C}^2),
\end{equation}
of the free Dirac operator $A_0$,
and its adjoint
\begin{equation*} 
  \begin{split}
    S^* f &= (-i \sigma \cdot \nabla + m \sigma_3) f_+ \oplus (-i \sigma \cdot \nabla + m \sigma_3) f_-, \\
    \dom S^* &= H(\sigma, \mathbb R^2_+) \oplus H(\sigma, \mathbb R^2_-);
  \end{split}
\end{equation*}
the above mentioned properties of $S$ and $S^*$ can be shown in the same way as in \cite[Proposition~3.1]{BH19}, where similar operators in $\mathbb{R}^3$ with compact surfaces $\Sigma$ have been studied.
 
Fix some $\zeta \in \rho(A_0)$. Define the maps $\Gamma_0, \Gamma_1: \dom S^* \rightarrow L^2(\Sigma; \mathbb{C}^2)$ by
  \begin{equation}\label{bt}
    \begin{split}
      \Gamma_0 f &:= -i \Lambda^{-1}  \sigma_2 \big(f_+\vert_\Sigma - f_-\vert_\Sigma \big), \\
      \Gamma_1 f &:= \frac{1}{2} \Lambda  \big( (f_+\vert_\Sigma + f_-\vert_\Sigma) - (\mathcal{C}_\zeta + \mathcal{C}_{\overline{\zeta}})  \Lambda \Gamma_0 f \big), \quad f = f_+ \oplus f_- \in \dom S^*,
    \end{split}
  \end{equation}
  where $\Lambda=(-\Delta+1)^\frac{1}{4}=\mathcal F^{-1}(p^2+1)^\frac{1}{4} \mathcal{F}$ is viewed as a bijective 
  operator from $H^s(\Sigma)$ onto $H^{s-1/2}(\Sigma)$ for $s=1/2$ or $s=0$,
  or as a self-adjoint unbounded operator in $L^2(\Sigma)$ defined on $H^{1/2}(\Sigma)$.
  It follows in the same way as in \cite[Proposition~3.5]{BHOP20} or \cite[Proposition~6.1]{CLMT21} that 
  $\{ L^2(\Sigma; \mathbb{C}^2), \Gamma_0, \Gamma_1 \}$ is a boundary triple for $S^*$ with $A_0 = S^* \upharpoonright \ker\Gamma_0$ and corresponding Weyl function 
  \begin{equation*}
    \rho(A_0) \ni z \mapsto M(z) = \Lambda  \left( \mathcal{C}_z - \frac{1}{2} (\mathcal{C}_\zeta + \mathcal{C}_{\overline{\zeta}}) \right)  \Lambda;
  \end{equation*}
  cf. \cite{BHS19, BGP, DM91, DM95} for details on boundary triples and Weyl functions in the extension theory of symmetric operators. 

Next we describe the operators $A_\eta$ with the help of the boundary triple \eqref{bt} and also conclude their self-adjointness in $L^2(\mathbb R^2; \mathbb{C}^2)$. 
The proof of this result follows the lines of \cite[Proposition~6.3]{CLMT21} and \cite[Proposition~4.3]{BHOP20}. 

\begin{lem} \label{proposition_A_eta_tau_bt}
 For $\eta\in \mathbb{R}\setminus\{0\}$ the operator
    \begin{equation*} 
      \begin{split}
        \Theta \varphi &:= -\Lambda \left[ \frac{1}{\eta}\sigma_0   + \frac{1}{2}  (\mathcal{C}_\zeta + \mathcal{C}_{\overline{\zeta}})  \right] \Lambda \varphi, \\
        \dom \Theta &= \bigg \{ \varphi \in L^2(\Sigma; \mathbb{C}^2): \left[ \frac{1}{\eta}\sigma_0  + \frac{1}{2}  (\mathcal{C}_\zeta + \mathcal{C}_{\overline{\zeta}})  \right] \Lambda \varphi \in H^{1/2}(\Sigma; \mathbb{C}^2) \bigg\},
      \end{split}
    \end{equation*}
    is self-adjoint in $L^2(\Sigma; \mathbb{C}^2)$ and we have 
    \begin{equation}\label{aaa}
    A_\eta = S^* \upharpoonright \ker(\Gamma_1 - \Theta \Gamma_0).
    \end{equation}
    In particular, the operator $A_\eta$ is self-adjoint in $L^2(\mathbb{R}^2; \mathbb{C}^2)$.
\end{lem}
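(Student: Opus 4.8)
The plan is to deduce the self-adjointness of $A_\eta$ from the abstract theory of boundary triples, which reduces the statement to two essentially independent tasks. Recall that if $\{L^2(\Sigma;\mathbb{C}^2),\Gamma_0,\Gamma_1\}$ is a boundary triple for $S^*$ and $\Theta$ is a self-adjoint operator in $L^2(\Sigma;\mathbb{C}^2)$, then $S^*\upharpoonright\ker(\Gamma_1-\Theta\Gamma_0)$ is automatically self-adjoint in $L^2(\mathbb{R}^2;\mathbb{C}^2)$; see \cite{BHS19, BGP, DM91, DM95}. Since the boundary triple in \eqref{bt} has already been set up, it therefore suffices to prove (a) that $\Theta$ is self-adjoint in $L^2(\Sigma;\mathbb{C}^2)$ and (b) the operator identity \eqref{aaa}; the final self-adjointness assertion for $A_\eta$ is then immediate.

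For (a) I would conjugate $\Theta$ with the Fourier transform $\mathcal{F}$ and exploit the explicit description in Lemma~\ref{proposition_C_z_line}. By that lemma $\mathcal{F}\mathcal{C}_\zeta\mathcal{F}^{-1}$ and $\mathcal{F}\mathcal{C}_{\overline{\zeta}}\mathcal{F}^{-1}$ are multiplications by bounded matrix-valued symbols, and since the chosen branch of the square root satisfies $\sqrt{p^2+m^2-\overline{\zeta}^{2}}=\overline{\sqrt{p^2+m^2-\zeta^2}}$ for $\zeta\in\rho(A_0)$ and the symbol in Lemma~\ref{proposition_C_z_line} is symmetric, the symbol of $\mathcal{C}_{\overline{\zeta}}$ is the Hermitian adjoint of that of $\mathcal{C}_\zeta$; hence $\tfrac12(\mathcal{C}_\zeta+\mathcal{C}_{\overline{\zeta}})$ becomes multiplication by a Hermitian-matrix-valued function $N(\cdot)$. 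As $\mathcal{F}\Lambda\mathcal{F}^{-1}$ is multiplication by the scalar $(p^2+1)^{1/4}$, it follows that $\mathcal{F}\Theta\mathcal{F}^{-1}$ is multiplication by the Hermitian-matrix-valued symbol $-(p^2+1)^{1/2}\big(\tfrac{1}{\eta}\sigma_0+N(p)\big)$, and a short verification using that $\Lambda$ is a bijection $H^{1/2}(\Sigma)\to L^2(\Sigma)$ shows that $\mathcal{F}$ maps $\dom\Theta$ precisely onto the maximal domain of this multiplication operator. A multiplication operator by a measurable Hermitian-matrix-valued function is self-adjoint on its maximal domain, which is dense (it contains $\mathcal{S}(\Sigma;\mathbb{C}^2)$, the symbol growing only polynomially), so $\Theta$ is self-adjoint.

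For (b) I would argue by a direct computation in the spirit of \cite[Proposition~4.3]{BHOP20} and \cite[Proposition~6.3]{CLMT21}. Fix $f=f_+\oplus f_-\in\dom S^*$ and put $\varphi:=\Gamma_0 f$, so that $\Lambda\varphi=-i\sigma_2(f_+|_\Sigma-f_-|_\Sigma)$. Writing $f\in\ker(\Gamma_1-\Theta\Gamma_0)$ as the identity $\Gamma_1 f=-\Lambda\big[\tfrac{1}{\eta}\sigma_0+\tfrac12(\mathcal{C}_\zeta+\mathcal{C}_{\overline{\zeta}})\big]\Lambda\varphi$ — in which the membership $\varphi\in\dom\Theta$ is then forced by $\Gamma_1 f\in L^2(\Sigma;\mathbb{C}^2)$ together with the bijectivity of $\Lambda\colon H^{1/2}(\Sigma)\to L^2(\Sigma)$ — and substituting the definition of $\Gamma_1$ from \eqref{bt}, the $(\mathcal{C}_\zeta+\mathcal{C}_{\overline{\zeta}})$-terms cancel and the identity reduces to $\tfrac{1}{\eta}\Lambda\varphi=-\tfrac12(f_+|_\Sigma+f_-|_\Sigma)$. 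By injectivity of $\Lambda$ this is equivalent to $i\sigma_2(f_+|_\Sigma-f_-|_\Sigma)=\tfrac{\eta}{2}(f_+|_\Sigma+f_-|_\Sigma)$, which is precisely the transmission condition in $\dom A_\eta$ from \eqref{def_A_eta_tau_intro}. Hence $\ker(\Gamma_1-\Theta\Gamma_0)=\dom A_\eta$, and since $S^*$ acts as $-i\sigma\cdot\nabla+m\sigma_3$ on each half-plane, \eqref{aaa} follows; combined with (a) this yields the self-adjointness of $A_\eta$.

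The one point requiring genuine care is the Sobolev-space bookkeeping running through (a) and (b): one has to keep track of whether each boundary expression lies in $L^2(\Sigma;\mathbb{C}^2)$, $H^{1/2}(\Sigma;\mathbb{C}^2)$ or $H^{-1/2}(\Sigma;\mathbb{C}^2)$, to use that $\Gamma_1$ indeed maps $\dom S^*$ into $L^2(\Sigma;\mathbb{C}^2)$, and to check that every application of $\Lambda^{\pm1}$ and of $\mathcal{C}_z$ (bounded on all $H^s(\Sigma;\mathbb{C}^2)$ by Lemma~\ref{proposition_C_z_line}) is legitimate and that the identification of $\dom\Theta$ with the maximal multiplier domain in (a) is exact. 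Beyond this bookkeeping there is no conceptual obstacle.
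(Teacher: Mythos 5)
Your proposal is correct and follows essentially the same route as the paper: Fourier conjugation turns $\Theta$ into the maximal multiplication operator with the Hermitian symbol $-\sqrt{p^2+1}\,(\tfrac{1}{\eta}\sigma_0+\mathrm{Re}\,(\mathcal{F}\mathcal{C}_\zeta\mathcal{F}^{-1})(p))$, whence self-adjointness, and the identity \eqref{aaa} is checked by the same direct cancellation of the $(\mathcal{C}_\zeta+\mathcal{C}_{\overline\zeta})$-terms, after which \cite[Corollary~2.1.4~(v)]{BHS19} gives the self-adjointness of $A_\eta$. The only difference is that you spell out the computation for \eqref{aaa} and the domain bookkeeping, which the paper leaves as "not difficult to verify".
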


\begin{proof}
  Define the $\mathbb{C}^{2 \times 2}$-valued function
  \begin{equation*}
    \theta(p) := -\sqrt{p^2+1}  \begin{pmatrix} \frac{1}{\eta} + \textup{Re} \, \frac{\zeta + m}{2 \sqrt{p^2+m^2-\zeta^2}} & \textup{Re} \, \frac{p}{2 \sqrt{p^2+m^2-\zeta^2}} \\ \textup{Re} \, \frac{p}{2 \sqrt{p^2+m^2-\zeta^2}} & \frac{1}{\eta } + \textup{Re} \, \frac{\zeta - m}{2 \sqrt{p^2+m^2-\zeta^2}} \end{pmatrix}.
  \end{equation*}
  Then, by Lemma~\ref{proposition_C_z_line} we have 
  \begin{equation*} 
    \begin{split}
      \mathcal{F} \Theta  \mathcal{F}^{-1} \varphi (p) &= \theta(p) \varphi(p), \\
      \dom \mathcal{F}  \Theta \mathcal{F}^{-1} &= \{ \varphi \in L^2(\mathbb{R}; \mathbb{C}^2): \theta \varphi \in L^2(\mathbb{R}; \mathbb{C}^2) \}.
    \end{split}
  \end{equation*}
  Since $\theta$ is a symmetric matrix and $\mathcal{F}$ is unitary, we conclude that $\Theta$ is self-adjoint in $L^2(\Sigma; \mathbb{C}^2)  \simeq L^2(\mathbb{R}; \mathbb{C}^2)$.   Using \eqref{bt} is not difficult to verify that \eqref{aaa} holds and hence the self-adjointness of $A_\eta$ follows, see, e.g., 
  \cite[Corollary 2.1.4~(v)]{BHS19}.
\end{proof}

In the next lemma we analyze, when zero belongs to the point spectrum or continuous spectrum of $\Theta - M(z)$ for
$z \in (-|m|, |m|)$. 

\begin{lem} \label{proposition_essential_spectrum_Theta}
  Let $\eta\in \mathbb{R}\setminus\{0\}$. For $z \in (-|m|, |m|)$ the following holds:
  \begin{itemize}
    \item[(i)] If $\eta\neq \pm 2$, then $0 \notin \sigma_\textup{p}(\Theta - M(z))$ and $0 \in \sigma_\textup{c}(\Theta - M(z))$ if and only if 
    \begin{equation} \label{essential_spectrum_Theta1}
      \frac{z \eta}{\eta^2-4}>0
    \end{equation}
    and
    \begin{equation*} 
      z \leq  - \frac{  |m (\eta^2-4) |}{\eta^2+4} \quad\text{or}\quad z \geq   \frac{  |m (\eta^2-4) |}{\eta^2+4}.
    \end{equation*}
    \item[(ii)] If $\eta=\pm 2$, then $0 \in \sigma_\textup{p}(\Theta - M(0))$ with $\dim\ker(\Theta - M(0))=+\infty$,
    and $0 \in \rho(\Theta - M(z))$ for $z \neq 0$.
  \end{itemize}
\end{lem}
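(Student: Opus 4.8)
The plan is to conjugate $\Theta-M(z)$ with the Fourier transform $\mathcal F$ and to read off its spectrum from the resulting matrix multiplication operator. First I would observe that in the difference $\Theta-M(z)$ the $\mathcal C_\zeta$- and $\mathcal C_{\overline\zeta}$-terms cancel, so that $\Theta-M(z)=-\Lambda\big(\tfrac1\eta\sigma_0+\mathcal C_z\big)\Lambda$ for every $z\in(-|m|,|m|)$. Since $\mathcal F\Lambda\mathcal F^{-1}$ is multiplication by $(p^2+1)^{1/4}$ and, by Lemma~\ref{proposition_C_z_line}, $\mathcal F\mathcal C_z\mathcal F^{-1}$ is multiplication by the matrix displayed there, the operator $\mathcal F(\Theta-M(z))\mathcal F^{-1}$ is multiplication in $L^2(\mathbb R;\mathbb C^2)$ (with its natural maximal domain) by
\[
  R_z(p)=-\sqrt{p^2+1}\,N_z(p),\qquad N_z(p)=\tfrac1\eta\sigma_0+\begin{pmatrix}\frac{z+m}{2\kappa(p)}&\frac{p}{2\kappa(p)}\\[1mm]\frac{p}{2\kappa(p)}&\frac{z-m}{2\kappa(p)}\end{pmatrix},\qquad \kappa(p):=\sqrt{p^2+m^2-z^2},
\]
which for $z\in(-|m|,|m|)$ is a continuous, real symmetric matrix valued function. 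A short computation, using $\kappa(p)^2=p^2+m^2-z^2$, then gives the two identities
\[
  \det N_z(p)=\frac1{\eta^2}-\frac14+\frac{z}{\eta\,\kappa(p)},\qquad \big(\operatorname{tr}N_z(p)\big)^2-4\det N_z(p)=1+\frac{z^2}{\kappa(p)^2}>0.
\]
The second identity shows that $N_z(p)$ has two distinct real eigenvalues $\mu_+(p),\mu_-(p)$ which are continuous in $p$ and satisfy $\mu_\pm(p)\to\tfrac{2\pm\eta}{2\eta}$ as $|p|\to\infty$; hence $R_z(p)$ depends continuously on $p$ as well, the spectrum of the multiplication operator by $R_z$ equals $\overline{\bigcup_{p}\sigma(R_z(p))}$, the number $0$ is an eigenvalue of it if and only if $\{p:\det N_z(p)=0\}$ has positive Lebesgue measure, and everything reduces to the scalar functions $p\mapsto\det N_z(p)=\mu_+(p)\mu_-(p)$ and $p\mapsto s(p):=\operatorname{dist}\big(0,\sigma(R_z(p))\big)=\sqrt{p^2+1}\,\min\{|\mu_+(p)|,|\mu_-(p)|\}$.

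For $\eta\neq\pm2$ (part~(i)), the equation $\det N_z(p)=0$ is equivalent to $\kappa(p)=\tfrac{4\eta z}{\eta^2-4}$, which pins $|p|$ down to at most one value; hence the zero set of $\det N_z$ is finite, so $0\notin\sigma_{\textup{p}}(\Theta-M(z))$ and $0$ can at most lie in $\sigma_{\textup{c}}(\Theta-M(z))$. If $\det N_z$ has no zero, then $\mu_+(p)$ and $\mu_-(p)$ are nowhere zero and tend to the nonzero limits $\tfrac{2\pm\eta}{2\eta}$, so $\min\{|\mu_+(p)|,|\mu_-(p)|\}$---and hence $s(p)$---is bounded below by a positive constant, that is, $0\in\rho(\Theta-M(z))$. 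If $\det N_z$ vanishes at some $p_0$, then $0\in\sigma(R_z(p_0))\subseteq\sigma(\Theta-M(z))$, and by the previous point $0\in\sigma_{\textup{c}}(\Theta-M(z))$. It then remains to translate ``$\det N_z$ has a zero'' into the asserted conditions: $\tfrac{4\eta z}{\eta^2-4}$ has to be positive---which is precisely \eqref{essential_spectrum_Theta1}---and it has to be at least $\kappa(0)=\sqrt{m^2-z^2}$; squaring the latter inequality and using the elementary identity $16\eta^2+(\eta^2-4)^2=(\eta^2+4)^2$ turns it into $|z|\ge\tfrac{|m(\eta^2-4)|}{\eta^2+4}$, which is the displayed alternative.

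For $\eta=\pm2$ (part~(ii)), we have $\tfrac1{\eta^2}-\tfrac14=0$, so $\det N_z(p)=\tfrac{z}{\eta\,\kappa(p)}$. If $z=0$ this vanishes identically, so $R_0(p)=-\sqrt{p^2+1}\,N_0(p)$ is singular for every $p$; since $\operatorname{tr}N_0(p)=\tfrac2\eta\neq0$, the kernel $\ker N_0(p)=\ker R_0(p)$ is exactly one dimensional for every $p$, and a measurable choice of a unit vector spanning it (obtained, e.g., from the continuous spectral projection $\tfrac\eta2 N_0(p)$ onto the complementary eigenspace) yields an embedding of $L^2(\mathbb R)$ into $\ker(\Theta-M(0))$; thus $0\in\sigma_{\textup{p}}(\Theta-M(0))$ with $\dim\ker(\Theta-M(0))=+\infty$. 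If $z\neq0$, then $\det N_z(p)=\mu_+(p)\mu_-(p)$ is nowhere zero; writing $\min\{\mu_+(p)^2,\mu_-(p)^2\}=(\det N_z(p))^2/\max\{\mu_+(p)^2,\mu_-(p)^2\}$, bounding $\max\{\mu_+(p)^2,\mu_-(p)^2\}\le(\operatorname{tr}N_z(p))^2-2\det N_z(p)$ by a constant depending only on the (fixed) $z$, and using $(\det N_z(p))^2=\tfrac{z^2}{4\kappa(p)^2}$, one gets $s(p)^2\ge c\,\tfrac{z^2(p^2+1)}{p^2+m^2-z^2}$ for some $c>0$; since $\tfrac{p^2+1}{p^2+m^2-z^2}$ is bounded below by a positive constant, $\inf_p s(p)>0$ and so $0\in\rho(\Theta-M(z))$.

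The real work is in checking the two matrix identities above and in the bookkeeping that converts the existence of a root of $\det N_z$ into the two displayed inequalities; the operator-theoretic ingredients---the description of the spectrum of a continuous matrix multiplication operator, and the measurable selection of a one dimensional kernel---are standard. I expect the only mildly delicate point to be the uniform lower bound $\inf_p s(p)>0$ when $\det N_z$ is zero-free: in the case $\eta=\pm2$, $z\neq0$ one of the eigenvalues $\mu_\pm(p)$ decays like $|p|^{-1}$ as $|p|\to\infty$, and one must notice that this is exactly compensated by the factor $\sqrt{p^2+1}$ in $s(p)$, so that $s(p)$ nonetheless stays bounded away from $0$---equivalently, $\det R_z(p)=(p^2+1)\det N_z(p)$ does not decay.
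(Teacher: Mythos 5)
Your proposal is correct and follows essentially the same route as the paper: cancel the $\zeta$-terms, conjugate with the Fourier transform, and read everything off from the determinant of the resulting $2\times 2$ symbol, with the same reduction of $\det N_z(p)=0$ to $\kappa(p)=\tfrac{4\eta z}{\eta^2-4}$ and the same squaring argument via $16\eta^2+(\eta^2-4)^2=(\eta^2+4)^2$. The only difference is that you are more explicit than the paper about why pointwise invertibility of the symbol yields $0\in\rho(\Theta-M(z))$ (via the uniform lower bound on $s(p)$, including the compensation of the decaying eigenvalue by the factor $\sqrt{p^2+1}$ when $\eta=\pm2$, $z\neq0$), a step the paper's proof treats tersely.
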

\begin{proof}
  Let $z \in (-|m|, |m|)$ and observe that
  \begin{equation} \label{unitary_equivalent} 
        \Theta-M(z) = -\Lambda \left[ \frac{1}{\eta}\sigma_0   + \mathcal{C}_z  \right] \Lambda \varphi, \quad\dom (\Theta-M(z))=\dom\Theta.
    \end{equation}
   Using Lemma~\ref{proposition_C_z_line} we conclude   
   \begin{equation*}
    \begin{split}
      \mathcal{F} (\Theta-M(z))  \mathcal{F}^{-1} \varphi (p) &= \theta_z(p) \varphi(p), \\
      \dom \mathcal{F}  (\Theta-M(z))\mathcal{F}^{-1} &= \{ \varphi \in L^2(\mathbb{R}; \mathbb{C}^2): \theta_z \varphi \in L^2(\mathbb{R}; \mathbb{C}^2) \},
    \end{split}
  \end{equation*} 
   where 
   \begin{equation*}
    \theta_{z}(p)=-\sqrt{p^2+1}  \begin{pmatrix} \frac{1}{\eta} + \frac{z + m}{2 \sqrt{p^2+m^2-z^2}} & \frac{p}{2 \sqrt{p^2+m^2-z^2}} \\ \frac{p}{2 \sqrt{p^2+m^2-z^2}} & \frac{1}{\eta} + \frac{z - m}{2 \sqrt{p^2+m^2-z^2}} \end{pmatrix},
  \end{equation*}
  and hence it suffices to consider the self-adjoint multiplication operator with the function $\theta_z$ in $L^2(\mathbb{R}; \mathbb{C}^2)$.
  In the following we discuss for which $\eta\in \mathbb{R}\setminus\{0\}$ and $z\in (-|m|, |m|)$ the multiplication operator $\theta_z$ 
  has $0$ as an eigenvalue or as a point in the continuous spectrum. Note first that
  $$
  \det \theta_z(p) = (p^2+1) \left[ \frac{1}{\eta^2}+ \frac{z}{\eta\sqrt{p^2+m^2-z^2}}-\frac{1}{4}\right].
  $$
  
We verify assertion (ii).
 In the case $\eta=\pm 2$ we have $\det \theta_z(p)=0$ if and only if $z=0$, and in this situation $\det \theta_z(p)=0$ for all $p\in\mathbb R$.
 Therefore, $0$ is an eigenvalue of infinite multiplicity of the multiplication operator $\theta_z$. Observe that for $z \in (-|m|, |m|)\setminus\{0\}$
 we have $\det \theta_z(p)\not=0$ and hence $0$ is in the resolvent set of $\theta_z$. 
 
 Now we prove (i).
 If $\eta\neq \pm 2$, 
 then $\det \theta_z(p)=0$ is equivalent to
  \begin{equation} \label{zero_equation2}
    \begin{split}
      \sqrt{p^2+m^2-z^2} = 4 \frac{z \eta}{\eta^2  - 4}.
    \end{split}
  \end{equation}
  The left hand side of the last equation is positive and hence there exist solutions only if also the right hand side is positive, i.e. only if~\eqref{essential_spectrum_Theta1} holds. Assuming this, we see by squaring the last equation that it is equivalent to
  \begin{equation}\label{lolo}
    p^2 = \frac{(\eta^2  + 4)^2}{(\eta^2  - 4)^2}\,z^2  - m^2.
  \end{equation}
  When $p \in \mathbb{R}$ varies the left hand side can be any non-negative number, and hence \eqref{zero_equation2} has a solution, whenever the right hand side is non-negative, i.e. if and only if $z \leq z_-$ or $z \geq z_+$, where $z_{\pm}$ are the zeros of the polynomial on the right hand side of the last equation given by
  \begin{equation*}
    z_{\pm} = \pm \left| m \frac{\eta^2 -4}{\eta^2 +4}\right|.
  \end{equation*}
  It is also clear that for a fixed $z \in (-|m|, |m|)$ with $z \leq z_-$ or $z \geq z_+$ there are only two values $p\in\mathbb R$
  such that \eqref{lolo} holds, and hence $0$ is in the continuous spectrum of the multiplication operator associated with $\theta_z$.
\end{proof}

In order to prove Theorem~\ref{main}, we also investigate the limiting behaviour of $( \Theta - M(z) )^{-1}$, when $z \in \mathbb{C} \setminus \mathbb{R}$ approaches $\sigma(A_0) = (-\infty, -|m|] \cup [|m|, +\infty)$:

\begin{lem} \label{Proposition_continuous_spectrum}
  Let $\eta \in \mathbb{R} \setminus \{ 0 \}$. Then, the following is true:
  \begin{itemize}
    \item[(i)] For any $x \in (-\infty, -|m|] \cup [|m|, +\infty)$ and all $\varphi \in L^2(\Sigma; \mathbb{C}^2)$ one has  
    \begin{equation} \label{limit1}
      \lim_{y \searrow 0} i y \big( \Theta - M(x+iy) \big)^{-1} \varphi = 0.
    \end{equation}
    \item[(ii)] For any $x \in (-\infty, -|m|) \cup (|m|, +\infty)$ there exists $\varphi \in L^2(\Sigma; \mathbb{C}^2)$ such that
    \begin{equation} \label{limit2}
      \lim_{y \searrow 0} \textup{Im}\, \big( \big( \Theta - M(x+iy) \big)^{-1} \varphi, \varphi \big) \neq 0.
    \end{equation}
  \end{itemize}
\end{lem}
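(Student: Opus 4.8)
The plan is to diagonalize $\Theta-M(z)$ and $(\Theta-M(z))^{-1}$ via the Fourier transform $\mathcal F$ on $\Sigma\simeq\mathbb R$. By \eqref{unitary_equivalent} and Lemma~\ref{proposition_C_z_line}, for $z\in\rho(A_0)$ the operator $\mathcal F(\Theta-M(z))\mathcal F^{-1}$ is multiplication by the symbol $\theta_z(p)$ from the proof of Lemma~\ref{proposition_essential_spectrum_Theta}, and hence $\mathcal F(\Theta-M(z))^{-1}\mathcal F^{-1}$ is multiplication by $\theta_z(p)^{-1}$ wherever $\theta_z(p)$ is invertible. Writing $w=w_z(p):=\sqrt{p^2+m^2-z^2}$ and inverting the $2\times2$ matrix explicitly, a short computation yields the closed form
\[
  \theta_z(p)^{-1}=\frac{-2\eta^2}{\sqrt{p^2+1}\,\bigl((4-\eta^2)w+4\eta z\bigr)}\begin{pmatrix} z-m+\tfrac{2w}{\eta} & -p\\[1mm] -p & z+m+\tfrac{2w}{\eta}\end{pmatrix},
\]
in which the scalar $(4-\eta^2)w+4\eta z$ coincides with $\det\theta_z(p)$ up to the nonvanishing factor $4w\eta^2/(p^2+1)$. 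Both assertions of the lemma are read off from this formula.

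For (i) I would in fact show $iy\,\theta_{x+iy}(p)^{-1}\widehat\varphi(p)\to0$ in $L^2(\mathbb R;\mathbb C^2)$ by dominated convergence, which gives \eqref{limit1}. As $y\searrow0$, for every $p$ with $p^2\ne x^2-m^2$ the number $w_{x+iy}(p)$ tends to a finite boundary value $w_0$: a positive real when $p^2>x^2-m^2$, and a nonzero purely imaginary number when $p^2<x^2-m^2$. In either case $(4-\eta^2)w_0+4\eta x\ne0$ for all but at most two values of $p$; indeed its only possible zeros occur where $w_0$ is real, namely at $p$ with $(\eta^2-4)^2(p^2+m^2)=(\eta^2+4)^2x^2$, and for $\eta=\pm2$ the denominator is just $4\eta z$, with $|4\eta z|\ge 8|m|>0$, so there are no exceptional points at all. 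Consequently the prefactor $iy$ forces $iy\,\theta_{x+iy}(p)^{-1}\to0$ for a.e.\ $p$. For the dominating function one checks from the closed form that $\|iy\,\theta_{x+iy}(p)^{-1}\|\le C_x$ uniformly in $p\in\mathbb R$ and $y\in(0,1]$: the point is that the growth of the numerator for large $|p|$ (of order $|p|$) and its blow-up where $p^2+m^2-x^2=0$ (and, when $x=\pm|m|$, near $p=0$) are compensated by the corresponding behaviour of $(4-\eta^2)w+4\eta z$, after dividing by $\sqrt{p^2+1}$; the only delicate region is a neighbourhood of the (at most two) exceptional points $p_\ast$, where one verifies $|(4-\eta^2)w+4\eta z|\gtrsim_x\max(|p-p_\ast|,y)$ (with implied constant depending on $x$), so that the leftover factor $y$ keeps the quotient bounded by $1$. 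Then $\|iy\,\theta_{x+iy}(p)^{-1}\widehat\varphi(p)\|\le C_x|\widehat\varphi(p)|\in L^2$ and dominated convergence applies.

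For (ii) I would use $\bigl((\Theta-M(z))^{-1}\varphi,\varphi\bigr)=\int_{\mathbb R}\bigl(\theta_z(p)^{-1}\widehat\varphi(p),\widehat\varphi(p)\bigr)\,dp$ and test against $\varphi$ with $\widehat\varphi=\mathbf 1_I\,v$, where $v\in\mathbb C^2$ is a fixed unit vector and $I$ a short interval contained in $\{p:0<|p|<\sqrt{x^2-m^2}\}$; this set is nonempty precisely because $|x|>|m|$, which is exactly where the endpoints $\pm|m|$ are excluded. Since $\overline I$ is compact and lies in $\{p^2<x^2-m^2\}$, the denominator $(4-\eta^2)w+4\eta z$ is bounded away from $0$ there for $z$ near $x$, so $z\mapsto\int_I(\theta_z(p)^{-1}v,v)\,dp$ extends continuously to $z=x$ and the limit $y\searrow0$ can be taken under the integral. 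On $I$ the boundary value $w_0$ is purely imaginary, and computing $\textup{Im}\,\theta_x(p)^{-1}$ from the closed form (say for $x>|m|$; the case $x<-|m|$ is analogous) shows it is a nonzero Hermitian matrix: for $\eta\ne\pm2$ its off-diagonal entry is a nonzero multiple of $p$, while for $\eta=\pm2$ it equals $\tfrac{\mu_0}{x\sqrt{p^2+1}}\,\sigma_0$ with $\mu_0=\sqrt{x^2-m^2-p^2}>0$. Picking $v$ in an eigenspace of $\textup{Im}\,\theta_x(p_0)^{-1}$ for a nonzero eigenvalue at some fixed $p_0\in I$, and shrinking $I$ so that $p\mapsto(\textup{Im}\,\theta_x(p)^{-1}v,v)$ keeps a constant sign on $I$, we obtain $\lim_{y\searrow0}\textup{Im}\bigl((\Theta-M(x+iy))^{-1}\varphi,\varphi\bigr)=\int_I(\textup{Im}\,\theta_x(p)^{-1}v,v)\,dp\ne0$, which is \eqref{limit2}.

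The main obstacle is the uniform-in-$p$ domination estimate in (i): since $\theta_z(p)$ itself is an unbounded symbol — growing like $\sqrt{p^2+1}$, blowing up where $p^2+m^2-x^2=0$, and additionally near $p=0$ when $x=\pm|m|$ — one cannot simply bound $\theta_{x+iy}(p)^{-1}$ crudely, but must track the precise cancellation between the numerator and the scalar $(4-\eta^2)w+4\eta z$ in the closed form, which forces the case distinctions sketched above. Everything else is routine once that closed form is in hand.
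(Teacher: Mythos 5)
Your proof of part (ii) is essentially the paper's argument: work in Fourier space, restrict to a compact interval $I$ inside $(-\sqrt{x^2-m^2},\sqrt{x^2-m^2})$, where the boundary value of $\sqrt{p^2+m^2-(x+iy)^2}$ is purely imaginary and the scalar $c_{x+i0}(p)=(4-\eta^2)\sqrt{p^2+m^2-x^2}+4\eta x$ is bounded away from zero, and observe that $\textup{Im}\,\theta_x^{-1}$ is a nontrivial Hermitian matrix there; your explicit computation of that imaginary part (off-diagonal entry proportional to $p$ for $\eta\neq\pm2$, a multiple of $\sigma_0$ for $\eta=\pm2$) just makes the paper's ``converges uniformly to a nontrivial limit'' concrete. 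For part (i) you take a genuinely different route. The paper combines two estimates: on $\{|p-p_\pm|\geq\sqrt{y}\}$ it proves the hands-on bound $|c_{x+iy}(p)|\geq C\sqrt{y}$, hence $|\theta_{x+iy}^{-1}(p)|\leq C/\sqrt{y}$, while on the complementary shrinking neighbourhood of $p_\pm$ it invokes the abstract operator-norm bound $\|(\Theta-M(x+iy))^{-1}\|\leq C_1/y$ obtained from the Nevanlinna operator representation of $z\mapsto(\Theta-M(z))^{-1}$ (via \cite[Theorem~4.2]{HSW98}); splitting $\varphi$ accordingly gives \eqref{limit1}. You instead establish the single uniform bound $|y\,\theta_{x+iy}^{-1}(p)|\leq C_x$ and conclude by dominated convergence, which dispenses with the Nevanlinna machinery entirely. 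The price is the sharper lower bound $|c_{x+iy}(p)|\gtrsim\max(|p-p_\ast|,y)$ near a zero $p_\ast$, which you assert with ``one verifies'' but do not carry out; it is true, and it is the one step that genuinely needs checking. Writing $c_{x+iy}(p)=(4-\eta^2)\bigl(w_{x+iy}(p)-w_0(p_\ast)\bigr)+4i\eta y$ with $w_0(p_\ast)=\sqrt{p_\ast^2+m^2-x^2}>0$ and expanding the difference of square roots, one finds $\textup{Im}\,c_{x+iy}(p)\approx\bigl(4\eta-\tfrac{(4-\eta^2)x}{w_0(p_\ast)}\bigr)y=4\eta\bigl(1+\tfrac{x^2}{w_0(p_\ast)^2}\bigr)y$, whose coefficient is nonzero precisely because $\eta\neq0$; together with $\textup{Re}\,c_{x+iy}(p)\approx\tfrac{(4-\eta^2)(p+p_\ast)}{2w_0(p_\ast)}(p-p_\ast)$ (and $p_\ast\neq0$ by \eqref{p_pm} since $|x|\geq|m|$) this yields your claimed estimate. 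With that supplied, your argument is complete and arguably more self-contained; what the paper's route buys is that it only ever needs the cruder $\sqrt{y}$ lower bound on $|c_{x+iy}|$ plus soft abstract facts about Weyl functions.
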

\begin{proof}
In order to show the claims, we note first with the help of~\eqref{unitary_equivalent} that the operator $\mathcal{F} (\Theta-M(z))^{-1}  \mathcal{F}^{-1}$, $z \in \mathbb{C} \setminus \mathbb{R}$, is the maximal multiplication operator associated with the matrix-valued function
\begin{equation*}
  \begin{split} 
    &\theta_{z}^{-1}(p)= -\frac{\sqrt{p^2+1}}{\det \theta_z(p)} \begin{pmatrix} \frac{1}{\eta} + \frac{z - m}{2 \sqrt{p^2+m^2-z^2}} & -\frac{p}{2 \sqrt{p^2+m^2-z^2}} \\ -\frac{p}{2 \sqrt{p^2+m^2-z^2}} & \frac{1}{\eta} + \frac{z + m}{2 \sqrt{p^2+m^2-z^2}} \end{pmatrix} \\
    &~~= -\frac{2 \eta}{c_z(p) \sqrt{p^2+1}} \begin{pmatrix} 2 \sqrt{p^2+m^2-z^2} + \eta(z - m) & -\eta p \\ -\eta p & 2 \sqrt{p^2+m^2-z^2} + \eta(z + m) \end{pmatrix}
  \end{split}
\end{equation*}
with
\begin{equation*}
  c_z(p) = (4-\eta^2)\sqrt{p^2+m^2-z^2} + 4\eta z.
\end{equation*}
By the continuity of the complex square root one sees for a fixed $p \in \mathbb{R}$ that the limit $c_{x+i0}(p) := \lim_{y \searrow 0} c_{x+iy}(p)$ exists.
One verifies for any $x \in (-\infty, -|m|] \cup [|m|, +\infty)$ in a similar way as in~\eqref{zero_equation2} and~\eqref{lolo} that $c_{x+i0}(p)$ has no zero, if $\eta = \pm 2$ or if $\frac{x \eta}{\eta^2-4}<0$, and for $\frac{x \eta}{\eta^2-4}>0$ the term $c_{x+i0}(p)$ has two zeros at
\begin{equation} \label{p_pm}
  p_\pm = \pm \left(  \frac{(\eta^2  + 4)^2}{(\eta^2  - 4)^2}\,x^2  - m^2 \right)^{1/2}.
\end{equation}

Let us prove~(i). Let $x \in (-\infty, -|m|] \cup [|m|, +\infty)$ be fixed. Since $z\mapsto M(z)$
is the Weyl function of a boundary triple it is a Nevanlinna function
and the values $M(z)$ are bounded and everywhere defined operators in 
$L^2(\Sigma; \mathbb{C}^2)$; cf. \cite[Corollary~2.3.7]{BHS19}. It follows
that $z\mapsto (\Theta-M(z))^{-1}$ is also a Nevanlinna function and for 
$z\in\rho(A_\eta)\cap\rho(A_0)$ the values are also bounded and everywhere defined operators in 
$L^2(\Sigma; \mathbb{C}^2)$. From the operator representation  of Nevanlinna functions (see, e.g., 
\cite[Theorem~4.2]{HSW98}) we then conclude
that there exists a constant $C_1 > 0$ such that
\begin{equation*}
  \big\| (\Theta-M(x+iy))^{-1} \big\| \leq \frac{C_1}{y}
\end{equation*}
for $y>0$ sufficiently small.
Since $(\Theta-M(x+iy))^{-1}$ is unitary equivalent to the multiplication operator with the function $\theta_{x+iy}^{-1}$, we conclude that
\begin{equation} \label{estimate_theta1}
  \big\| \theta_{x+iy}^{-1} \big\|_\infty \leq \frac{C_1}{y}
\end{equation}
for all $y>0$ sufficiently small.
Next, define for $y>0$ the set
\begin{equation} \label{def_I}
  I_x(y) := \begin{cases} \{ p \in \mathbb{R}: |p-p_\pm| \geq \sqrt{y} \}, & \text{if } c_{x+i0} \text{ has a zero}, \\ \mathbb{R},  & \text{if } c_{x+i0} \text{ has no zero}, \end{cases}
\end{equation}
and prove for all sufficiently small $y>0$ and $p \in I_x(y)$ that
\begin{equation} \label{estimate_theta2}
  \big| \theta_{x+iy}^{-1}(p) \big| \leq \frac{C_2}{\sqrt{y}}
\end{equation}
for some $C_2 > 0$, where the absolute value is understood element wise. 

In order to show~\eqref{estimate_theta2}, we note first that for some $C_3>0$ independent of $p \in \mathbb{R}$ and $y \in [0,1]$ the estimate
\begin{equation} \label{estimate_theta3}
  \big| c_{x+iy}(p) \theta_{x+iy}^{-1}(p) \big| \leq C_3
\end{equation}
holds. 

Consider first the case when $c_{x+i0}$ has no zero. From the definition of $c_{x+iy}$ we conclude that there exists $P_0 > 0$ such that $\big| c_{x+iy}(p) \big| > 1$ holds for all $|p| > P_0$ and all sufficiently small $y>0$.
Moreover, the map $[-P_0, P_0] \times [0, 1] \ni (p,y) \mapsto c_{x+iy}(p)$ is uniformly continuous and hence, there exists a constant $C_4 > 0$ such that for all sufficiently small $y > 0$ and $p \in [-P_0, P_0]$ the relation $|c_{x+iy}(p)| > C_4$ holds, as $c_{x+i0}$ has no zero. Thus, for $y>0$ sufficiently small $|c_{x+iy}|$ is uniformly bounded from below by a positive constant, which implies with~\eqref{estimate_theta3} the estimate in~\eqref{estimate_theta2}.

Assume now that $c_{x+i0}$ has the zeros $p_\pm$, let $y>0$ be sufficiently small, and fix $p \in \mathbb{R}$ with $|p-p_\pm| \geq \sqrt{y}$. From~\eqref{p_pm} we get $\sqrt{p_\pm^2 + m^2 - x^2} > 0$ and thus,
\begin{equation*}
  \begin{split}
    &\bigg|\sqrt{p^2+m^2-(x+iy)^2} - \sqrt{p_\pm^2+m^2-x^2}\bigg| \\
    &\qquad= \left| \frac{p^2-(x+iy)^2 - p_\pm^2 + x^2}{\sqrt{p^2+m^2-(x+iy)^2} + \sqrt{p_\pm^2+m^2-x^2}} \right| \\
    &\qquad \geq \left| \frac{(p-p_\pm)(p + p_\pm)}{\sqrt{p^2+m^2-(x+iy)^2} + \sqrt{p_\pm^2+m^2-x^2}} \right| - \left| \frac{x^2-(x+iy)^2}{\sqrt{p_\pm^2+m^2-x^2}} \right| \\
    & \qquad \geq C_5 \sqrt{y}
  \end{split}
\end{equation*}
with a constant $C_5>0$, where we used in the first inequality for the denominator that the real part of the complex square root is positive and in the last estimate that  the functions
\begin{equation*}
  \mathbb{R}_{\pm} \ni p \mapsto \left|\frac{p \pm  |p_\pm|}{\sqrt{p^2+m^2-(x+iy)^2} + \sqrt{p_\pm^2+m^2-x^2}} \right|
\end{equation*}
are uniformly bounded from below independently of $y>0$ sufficiently small. Hence, we conclude for those $y>0$ that 
\begin{equation*}
  \big| c_{x+iy}(p)\big| = \big| c_{x+iy}(p)-c_{x+i0}(p_\pm) \big| \geq C_6 \sqrt{y}.
\end{equation*}
Together with~\eqref{estimate_theta3} this yields~\eqref{estimate_theta2}.

Now, we have everything in hands to prove item~(i).
Let $\chi$ be the characteristic function for the set $I_x(y)$ defined in~\eqref{def_I}. Then~\eqref{estimate_theta1} and~\eqref{estimate_theta2} imply for an arbitrary $\varphi \in L^2(\Sigma; \mathbb{C}^2)$
\begin{equation*}
 \begin{split}
  \lim_{y \searrow 0} \big\| y\theta_{x+iy}^{-1} \varphi \big\|_{L^2(\mathbb{R}; \mathbb{C}^2)} &\leq \lim_{y \searrow 0} \big\| y (1-\chi) \theta_{x+iy}^{-1} \varphi \big\|_{L^2(\mathbb{R}; \mathbb{C}^2)} + \lim_{y \searrow 0} \big\| y \chi\theta_{x+iy}^{-1} \varphi \big\|_{L^2(\mathbb{R}; \mathbb{C}^2)} \\
  &\leq \lim_{y \searrow 0} C_1 \big\| (1-\chi) \varphi \big\|_{L^2(\mathbb{R}; \mathbb{C}^2)} + \lim_{y \searrow 0} C_2 \sqrt{y} \big\| \varphi \big\|_{L^2(\mathbb{R}; \mathbb{C}^2)} =0.
\end{split}
\end{equation*}
Thus, assertion~(i) is true.

To show item~(ii), let $x \in (-\infty, -|m|) \cup(|m|, +\infty)$ and let $I$ be a non-empty compact interval which is contained in $(-\sqrt{x^2-m^2},\sqrt{x^2-m^2})$. Note that the numbers $p_\pm$ in~\eqref{p_pm} are not contained in $I$ and that 
\begin{equation*}
  \lim_{y \searrow 0} \text{Im}\, \sqrt{p^2 + m^2 - (x+iy)^2} \neq 0
\end{equation*}
for $p \in I$. Let $\widetilde{\chi}$ be the characteristic function for $I$. Taking the form of $\theta_{x+iy}^{-1}$ into account, we get that $\text{Im}\, (\widetilde{\chi} \theta_{x+iy}^{-1})$ converges uniformly to a nontrivial limit and hence, one can choose $\varphi \in L^2(\Sigma; \mathbb{C}^2)$ supported in $I$ such that
\begin{equation*}
  \lim_{y \searrow 0} \textup{Im}\, \big( \big(  \Theta - M(x+iy) \big)^{-1} \varphi, \varphi \big) \neq 0,
\end{equation*}
i.e. also item~(ii) is true.
\end{proof}

\begin{proof}[Proof of Theorem~\ref{main}]
First, we note that the case $\eta=0$ reduces to the free Dirac operator and hence, all claims are known here. So we assume from now on that $\eta \neq 0$. The self-adjointness of $A_\eta$ is shown in Lemma~\ref{proposition_A_eta_tau_bt}. The claims about $\sigma(A_\eta) \cap (-|m|, |m|)$ follow from Lemma~\ref{proposition_essential_spectrum_Theta} and \cite[Theorem~2.6.2]{BHS19}.

It remains to show for $\eta \in \mathbb{R} \setminus \{ 0 \}$ that $(-\infty, -|m|] \cup [|m|, +\infty)$ is purely continuous spectrum of $A_\eta$.
For this purpose let $S$ be the operator from~\eqref{def_S} and define the operator $T := S^* \upharpoonright (\dom A_\eta + \dom A_0)$. Let $\Gamma_0, \Gamma_1$ be the mappings from~\eqref{bt} and define $\Gamma_0^\Theta, \Gamma_1^\Theta: \dom T \rightarrow L^2(\Sigma; \mathbb{C}^2)$ by
\begin{equation*}
  \Gamma_0^\Theta f := \Gamma_1 f - \Theta \Gamma_0 f, \quad \Gamma_1^\Theta f := -\Gamma_0 f, \quad f \in \dom T.
\end{equation*}
Then one verifies in a similar way as in \cite[Proposition~2.4]{BHM20} that $\{L^2(\Sigma; \mathbb{C}^2), \Gamma_0^\Theta, \Gamma_1^\Theta \}$ is a quasi boundary triple for $S^*$ in the sense of \cite{BL07} with $T \upharpoonright \ker \Gamma_0^\Theta = A_\eta$ and Weyl function
\begin{equation*}
  \rho(A_\eta) \ni z \mapsto -\big(\Theta - M(z)\big)^{-1}.
\end{equation*}
Note that the operator $S$ in~\eqref{def_S} is simple; this can be shown as in \cite[Proposition~3.2]{BHM20} using \cite[Step 2 in the proof of Theorem 3.4]{BR20}. Hence,  Lemma~\ref{Proposition_continuous_spectrum}~(i) and \cite[Theorem~3.2]{BR15_2} imply that $(-\infty, -|m|] \cup [|m|, +\infty) \cap \sigma_\text{p}(A_\eta)=\emptyset$. Furthermore, Lemma~\ref{Proposition_continuous_spectrum}~(ii) and \cite[Theorem~3.5]{BR15_2} yield $(-\infty, -|m|) \cup (|m|, +\infty) \subset \sigma_\text{c}(A_\eta)$. Combining these facts shows
that $(-\infty, -|m|] \cup [|m|, +\infty)$ is contained in the purely continuous spectrum of $A_\eta$.
This finishes the proof of Theorem~\ref{main}.
\end{proof}



\end{document}